\newtheorem{theorem}{Theorem}[section]
\newtheorem{lemma}[theorem]{Lemma}
\theoremstyle{remark}
\theoremstyle{definition}
\newtheorem{conjecture}[theorem]{Conjecture}
\newtheorem{problem}[theorem]{Problem}
\numberwithin{equation}{section}
\newcommand{\F}{\mathbb{F}_q}
\newcommand{\f}{\mathbb{F}_4}
\newcommand{\la}{\lambda}
\newcommand{\m}{\mathrm{mod}\;}
\newcommand{\g}{\gamma}
\newcommand{\A}{\alpha}
\lstdefinelanguage{pari-gp}
{
  morekeywords={my,for,return,if,fordiv,forprime,while},
  sensitive=true,
  morecomment=[l]{\\\\},
  morecomment=[s]{/*}{*/},
  morestring=[b]"
}
\begin{document}
\baselineskip=17pt


\title[Representing elements in finite fields ...]{Finite fields whose members are the sum of \\ a potent and a 4-potent}

\author[S.D.~Cohen]{Stephen D. Cohen}
\address{6 Bracken Road, Portlethen, Aberdeen AB12 4TA, Scotland, UK}
\email{stephen.cohen@glasgow.ac.uk}
\author[P.V.~Danchev]{Peter V. Danchev}
\address{Bulgarian Academy of Sciences, Institute of Mathematics and Informatics, Sofia 1113, Bulgaria}
\email{danchev@math.bas.bg}
\author[T.O.e~Silva]{Tom\'as Oliveira e Silva}
\address{IEETA/LASI, Institute of Electronics and Informatics Engineering of Aveiro}
\address{DETI, Department of Electronics, Telecommunications and Informatics, University Of Aveiro, 3810-193 Aveiro, Portugal}
\email{tos@ua.pt}

\date{\today}

\begin{abstract} We classify those finite fields $\F$ whose members are the sum of an $n$-potent element with $n>1$  and a 4-potent element. It is shown that there are precisely ten non-trivial pairs $(q,n)$ for which this is the case.   
This continues a recent publication by Cohen-Danchev et al. in Turk. J. Math. (2024) in which the tripotent version was examined in-depth as well as it extends recent results of this branch established by Abyzov-Tapkin in Sib. Math. J. (2024).
\end{abstract}

\subjclass[2010]{16D60, 16U60, 11T30}

\keywords{potents, 3-potents, 4-potents, finite fields, primitive elements, character sums}

\maketitle

\section{Introduction and Motivation}

Throughout this article, let $\F$ denote the finite field of order a prime power $q=p^v$, where $p$ is a prime and $v$ is a positive integer.  Further,  let  $n$ be an integer strictly exceeding 1. If $a \in \F$ is such that $a^n=a$, then $a$ is called an {\em  $n$-potent}, and we denote the set of all $n$-potents in $\F$ by $C_n$. Obviously, if $(q-1)|n$, then $C_n=\F$. Moreover, if $m$ denotes $\mathrm{gcd}(n,q-1)$, then $C_m=C_n$. Hence, the full range of non-trivial sets $C_n$ is obtained by restricting $n$ to the integers for which  $(n-1)|(q-1)$ and $n>1$.
\medskip

The original motivation for this line of investigation comes from problems involving the notion of potents in ring theory and, in particular, in some aspects of matrix theory. This has inspired the following general problem.

\medskip

\noindent{\bf General Problem.} Find those finite fields each of whose members is the sum of an $m$-potent and an $n$-potent, where $m > 1$ and $n >1$ are arbitrary integers.

\medskip

A discussion of the case in which $m=2$ relating to the sum of an $n$-potent and a 2-potent (= idempotent) was given in \cite{at-2021} where a little finite field theory was used. Likewise, the article \cite{acdt-2023} explored the  case with $m=3$ relating to a sum between an $n$-potent and a 3-potent (= tripotent). This involved more serious finite field theory which is the subject of \cite{cohen-2026}. (See also  \cite{at1}, \cite{at2} for some other closely related material.) It follows that is sensible to isolate the following problem as a worthy object of study in finite field theory. Concretely, we are interested in whether $C_n+C_4=\F$ whenever $n\geq 2$, formulated more exactly as follows:  

\medskip

\noindent{\bf Restricted Problem.} Characterize those finite fields for which every element is a sum of an $n$-potent (where $n>1$) and a 4-potent. As noted above, we can assume $(n-1)|(q-1)$, and also $n<q$, to avoid the trivial case in which $n=q$ (in which case $C_n=\F$).

\medskip

By way of comparison, the existence theorem implied by Theorem 1.2 and Lemma 2.1 of \cite{acdt-2023} can be stated as follows.

\medskip
\begin{theorem}\label{tripotent}
Suppose $q$ is a prime power and $n$ is a positive integer such that $n<q$ and $n-1$ divides $q-1$. Then, every element of $\F$ is the sum of a tripotent and an $n$-potent if, and only if, $n=\frac{q-1}{2}$ and $q \in \{3,5,7,9\}$.
\end{theorem}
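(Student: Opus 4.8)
The plan is to recast $C_3+C_n=\F$ as a covering statement and then reduce it, via an elementary size count, to at most two explicit residue problems. Recall that $C_3=\{0,1,-1\}$ has three elements when $p$ is odd and two when $p=2$, while $C_n=\{0\}\cup H$, where $H\le\F^*$ is the subgroup of order $n-1$; thus $|C_n|=n$. Set $e=(q-1)/(n-1)$, an integer that is $\ge 2$ because $n<q$. Since $|C_3+C_n|\le|C_3|\,|C_n|$, coverage forces $|C_3|\,n\ge q$. For odd $p$ this reads $3\bigl((q-1)/e+1\bigr)\ge q$, which gives $e\le 3$ once $q>9$; for $p=2$ we have $|C_3|=2$ and, as $q-1$ is odd, every admissible $e$ is odd, hence $e\ge 3$ and $2\bigl((q-1)/3+1\bigr)<q$ as soon as $q>4$. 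So even characteristic yields nothing nontrivial, and for odd $q>9$ only the cases $e=2$ (where $H=(\F^*)^2$, so $C_n=\{0\}\cup(\F^*)^2$) and $e=3$ (where $H$ is the group of cubes) survive.

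First I would settle the principal case $e=2$, i.e.\ $n=(q+1)/2$, in which the nonzero $n$-potents are exactly the quadratic residues. Here a scalar $c$ lies outside $C_3+C_n$ precisely when each of $c,\,c-1,\,c+1$ is a nonzero non-square. Let $\eta$ be the quadratic character (with $\eta(0)=0$). Since $0,\pm1$ are automatically covered, expanding $\prod_{t\in\{0,1,-1\}}\tfrac{1-\eta(c-t)}{2}$ and summing over $c$ shows that the number $N$ of uncovered elements satisfies
\[
8N \;=\; q-3-\!\sum_{c\in\F}\eta(c^3-c)\;-\;R,\qquad 0\le R\le 24,
\]
where the single-character sums vanish, each paired sum $\sum_c\eta\bigl((c-s)(c-t)\bigr)$ contributes $-1$, and $R$ is the bounded correction coming from $c\in\{0,\pm1\}$. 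By the Hasse--Weil bound for the elliptic curve $y^2=x^3-x$ one has $\bigl|\sum_c\eta(c^3-c)\bigr|\le 2\sqrt q$, whence $N\ge\tfrac18\bigl(q-2\sqrt q-27\bigr)$, which is strictly positive for $q\ge 41$. Thus for $e=2$ coverage can occur only among the finitely many odd prime powers $q\le 40$.

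The case $e=3$ proceeds identically with the cubic character $\chi$ in place of $\eta$: the uncovered count now has main term $\tfrac{8}{27}q$ and an error controlled by Weil's bound for the character sums $\sum_{c}\prod_{t\in\{0,1,-1\}}\chi^{a_t}(c-t)$ with exponents $a_t\in\{0,1,2\}$ not all zero, so it too is positive for all large $q$, again reducing matters to small $q$ (those with $3\mid q-1$). It then remains to test all admissible pairs $(q,n)$ with $q$ below the threshold, which I would do by a short direct search---equivalently, by hunting for three consecutive non-residues (resp.\ non-cubes). This eliminates every admissible $q$ except $q\in\{3,5,7,9\}$, for each of which $C_n=\{0\}\cup(\F^*)^2$ with $n=(q+1)/2$ is checked to cover $\F$ (for $q=3$ trivially, since already $C_3=\F$). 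Assembling the three analyses gives the classification.

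The main obstacle I anticipate is not any single deep estimate---the only serious analytic input is the standard Weil/Hasse bound---but the bookkeeping needed to make the argument airtight and the threshold explicit: one must track the boundary terms at $c\in\{0,\pm1\}$ (and their cubic analogues) precisely enough to pin down an explicit $q_0$, and then carry out the finite verification over all odd prime powers below $q_0$ in both the quadratic and cubic regimes. Keeping $q_0$ small---the crude Weil estimate already gives $q_0\approx 41$---is what makes that terminal check genuinely short.
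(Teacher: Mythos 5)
Your proposal is correct in outline and follows essentially the same strategy that this paper (which merely quotes the theorem from the cited Turk.\ J.\ Math.\ article rather than reproving it) deploys for its own 4-potent analogue: a cardinality count reducing to $e\in\{2,3\}$, a character-sum expansion of the number of uncovered elements estimated via Weil's bound, and a terminal finite verification for small $q$. The only blemishes are cosmetic: your correction term $R$ can in fact be positive as well as negative (which only strengthens the lower bound on $N$, so nothing breaks), and the even-characteristic case $q=4$, where your size count gives equality rather than strict inequality, needs the one-line check $C_3+C_2=\{0,1\}\ne\mathbb{F}_4$.
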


\medskip

The main result of this paper, motivating our writing of the present paper, is to give a complete answer to the stated above Restricted Problem. Specifically, we will prove in the sequel the following statement.

\begin{theorem}\label{main}
Suppose $q$ is a prime power and $n$ is a positive integer such that $n<q$ and $n-1$ divides $q-1$. Then, every element of $\F$ is the sum of a $4$-potent and an $n$-potent if, and only if, $(q,n)$ is one of the pairs in the set
\[\{4,2), (7,3),(7,4), (13,7), (19,10), (25,13), (31,16), (43,22), (49,25), (103,52)\}.\]
\end{theorem}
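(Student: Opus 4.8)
The plan is to treat the condition $C_4+C_n=\F$ as an additive covering problem and to dispose of all but finitely many $q$ by a Weil-type character-sum estimate, the remaining fields then being checked directly. First I would record the two cardinalities that drive everything. Since $(n-1)\mid(q-1)$, we have $C_n=\{0\}\cup H$, where $H$ is the unique subgroup of $\F^*$ of order $n-1$, so $|C_n|=n$; and $|C_4|=1+\gcd(3,q-1)$, which equals $4$ when $3\mid q-1$ and $2$ otherwise. Because any sumset satisfies $|C_4+C_n|\le |C_4|\,|C_n|$, a necessary condition for $C_4+C_n=\F$ is $|C_4|\,n\ge q$. Writing $d=(q-1)/(n-1)$, so that $H$ is exactly the group of nonzero $d$-th powers and $n=\frac{q-1}{d}+1$, this inequality forces $d\le \frac{|C_4|(q-1)}{q-|C_4|}$; when $3\mid q-1$ it gives $d\le 4+\frac{12}{q-4}$, hence $d\in\{2,3,4\}$ for all but a few small $q$, and when $3\nmid q-1$ it forces $d=2$.

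Next I would reformulate the covering statement. Put $S=C_4$, so $|S|=4$ when $3\mid q-1$ (namely $S=\{0\}\cup\mu_3$ with $\mu_3$ the cube roots of unity, genuinely distinct here since $3\mid q-1$ forces $p\neq 3$). Then $C_4+C_n=\F$ holds if and only if there is \emph{no} $\beta\in\F$ with $\beta-s$ a nonzero non-$d$-th-power for every $s\in S$; call such a $\beta$ \emph{bad}. Using the indicator $\frac1d\sum_{j=0}^{d-1}\psi^{\,j}(x)$ of the $d$-th powers, where $\psi$ is a fixed multiplicative character of order $d$, the number $B$ of bad $\beta$ can be written (after removing the harmless $\beta\in S$, for which some $\beta-s=0\in C_n$) as a sum over $\beta$ of $\prod_{s\in S}\bigl(1-\frac1d\sum_{j=0}^{d-1}\psi^{\,j}(\beta-s)\bigr)$.

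I would then expand this product. The constant term contributes the main term $\bigl(\tfrac{d-1}{d}\bigr)^{|S|}q$, a positive constant multiple of $q$, while every other term is a sum $\sum_\beta\psi^{\,k}\bigl(\prod_{s}(\beta-s)^{e_s}\bigr)$ against a non-trivial character of a polynomial with at most $|S|=4$ distinct roots; by the Weil bound each such sum is $O_d(\sqrt q)$. Hence $B=c_d\,q+O_d(\sqrt q)$ with $c_d>0$, so $B>0$ once $q$ exceeds an explicit threshold, which means a bad $\beta$ exists and the covering \emph{fails}. This shows that in each family $d\in\{2,3,4\}$ only finitely many $q$ can work, and a PARI/GP search over the surviving range, together with the finitely many small $q$ escaping the counting reduction and the fields with $3\nmid q-1$ (where the problem collapses to the idempotent case $C_2+C_n=\F$ governed by consecutive non-residues and settled via \cite{at-2021}), pins down exactly the ten listed pairs.

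The main obstacle I anticipate is twofold. Analytically, the leading coefficient $c_d=(\tfrac{d-1}{d})^4$ shrinks as $d$ grows while the Weil error carries a factor depending on the degree $4$, so the effective threshold for $d=4$ (the quartic-character case) is the largest and must be made explicit to keep the computational range honest; pushing the bound low enough that the finite check is feasible is the crux. Structurally, I must treat the degenerate coincidences carefully: when $\mu_3\subseteq H$ (i.e.\ $9\mid q-1$), or when $H$ already contains some shifts, several of the four shifted polynomials coincide or factor, lowering the degree and the main term, and the genuinely exceptional field $q=4$, where $C_4=\f$ makes the covering automatic, must be recorded separately. Once these degeneracies and the small-field computation are dealt with, the ten pairs of the theorem emerge as the complete list.
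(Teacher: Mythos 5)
Your strategy is essentially the one the paper follows: the cardinality bound $|C_4|\,|C_n|\ge q$ reduces the problem to $d=(q-1)/(n-1)\in\{2,3,4\}$ (with $d=2$ forced when $3\nmid q-1$), failure of the covering is detected by counting elements $\beta$ all of whose translates $\beta-s$, $s\in C_4$, are non-$d$-th powers, that count is expanded into multiplicative character sums of polynomials with at most four distinct roots, Weil's bound makes the count positive for large $q$, and a machine check settles the finitely many remaining fields. Your unified indicator $\frac1d\sum_j\psi^j$ is just a compact rewriting of the paper's separate quadratic-character ($d=2$) and cubic-character ($d=3$) expansions. The one subcase where you genuinely diverge is $d=4$: the paper disposes of it for \emph{every} $q\equiv1\pmod{12}$ by a two-line inclusion--exclusion observation (the four translates $C_n+s$ pairwise share the elements $1,z,y,-1$, so their union has at most $4\cdot\frac{q+3}{4}-4=q-1<q$ elements), with no character sums and no computation; your quartic-character Weil estimate also works, but it yields the worst threshold of the three cases and the largest search range. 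Note too that the paper does not leave the error term as $O_d(\sqrt q)$: it evaluates all degree-$1$ and degree-$2$ sums exactly (they collapse to $O(1)$) and sharpens the bound on $\sum_\alpha\lambda(\alpha^3-1)$ via Jacobi sums, which is what brings the thresholds down to $q>124$ and $q>261$ and keeps the verification within $q\le 1000$. You correctly identify making these constants explicit as the crux; with only the crude Weil bounds the search range grows by a constant factor but remains feasible.

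One concrete loose end you should not delegate: in the branch $3\nmid q-1$ the count of consecutive non-residue pairs is exactly $\frac14(q-2+\lambda(-1))$, which vanishes at $q=3$, and indeed in $\mathbb{F}_3$ one has $C_4=C_2=\{0,1\}$ and $C_4+C_2=\mathbb{F}_3$, so the pair $(3,2)$ satisfies all the hypotheses and the covering. It is absent from the stated list (and is skipped by the appendix code, whose test \texttt{k<q-1} excludes $n=2$ precisely when $q=3$). So whichever route you take, the small-$q$ bookkeeping in this branch must be carried out explicitly rather than referred to \cite{at-2021}; as written, both your proposal and the quoted list gloss over this case.
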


\medskip

The main part of the proof of Theorem~\ref{main} is the theoretical discussion in Section \ref{principal}, but this is supplemented by vital computations described in the Appendix. Some closely related applications of Theorem~\ref{main} in ring theory may be given in further work, and perhaps there will be relevant applications of these results to other areas of algebra such as the matrix theory.

\section{Principal Results}\label{principal}

Given a prime power $q=p^v$, the question we explore is to discern values of $n<q$ with $(n-1)|(q-1)$ for which every element $\g$ of $\F$ can be written as $\A$ + $\beta$, where $\A \in C_n$ and $\beta \in  C_4$, i.e., whether $\displaystyle{\bigcup_{\beta \in C_4}(C_n+\beta) = \F}$.

In full generality $q$ may be odd or even: in the latter case $q=2^v$. In much of the discussion, however, $q$ will be odd, in which case let $\la$ denote the quadratic character on $\F$. Thus, $\la(0)=0$ and otherwise $\la(\g)= \pm1$ according as $\g$ is a non-zero square or non-square.

\medskip

First, suppose $q \not \equiv1 (\m 3)$. Then, $\F$ contains no cube root of unity besides $1$ and $C_4=C_2= \{0,1\}$.   Further, suppose

\begin{equation} \label{2P}
C_n\cup(C_n+1)=\F, \quad n<q, \; (n-1)|(q-1).
\end{equation}
Then, easily $q$ is odd and $n-1=(q-1)/2$, i.e., $n=(q+3)/2$, and $C_n$ comprises the set of all squares in $\F$.  Thus, for every member $\g \in \F$, either $\g$ or $\g+1$ must be a square. In other words, there cannot be a consecutive pair of non-squares in $\F$.  On the other hand, the number of pairs of consecutive elements of $\F$ that are both non-squares can be expressed as
\[ \frac{1}{4} \sum_{{\A  \neq 0, -1}}(1-\la(\A))((1- \la(\A+1)), \]
where the sum is over all $\A \in \F$  except $0$ or $-1$.
As in the method used in \cite{acdt-2023}, this number is exactly $\frac{1}{4}(q-2+\lambda(-1)) \ge (q-3)/4$, and so is positive provided $q>3$. We conclude that $(\ref{2P})$ is impossible in this case.

\medskip

From now on suppose $3|(q-1)$. In particular, if $q=2^v$, then $v$ is even and $C_4$ is the subfield $\f$. If $q$ is odd, then $\la((-3)=1$ and in every case $\F$ contains 3 cube roots of unity. Thus, $C_4= \{0, 1, z, z^2=-z-1\}$, where $z$ is a primitive cube root of unity in $\F$. Since $z=1/z^2$, then $\la(z)=1$ when $q$ is odd. Let $y=z^2=-1-z$. Now, the assumption $({}\ref{4P})$ takes the form
\begin{equation} \label{4P}
C_n \cup (C_n+1) \cup(C_n+z)\cup(C_n+y)= \F, \quad n<q, (n-1)|(q-1).
\end{equation}
Therefore, evidently, $n \geq (q-1)/4$. It follows that the possible value of $n$ that remain are $\frac{q-1}{i}+1$, where $i=2, 3,4$. Here, the values of $i=2, 4$ can only occur if $q$ is odd.

\medskip

\subsection{$q$ odd and $n=\frac{q+3 }{4}$\label{q-1/4}}

In considering $(\ref{4P})$, we first dispose of the case in which $i=4$; thus, $n-1=(q-1)/4$, where necessarily $4|(q-1)$, i.e., $q \equiv 1 (\m 12)$ and $C_n$ is the set of all 4th powers in $\F$. Since $4|q-1$, then $C_4 \subseteq C_n$. Since $0 \in C_n$, then $1 \in C_n\cap(C_n+1)$. Further, $z\in C_n\cap (C_n+z)$ and $y \in C_n\cap( C_n+y)$. Finally, $-1 =z+y=y+z\in ( C_n+z)\cap( C_n+y)$. Now, the cardinality of each set $C_n+k, (k \in C_4$) is $(q+3)/4$, so that the cardinality of $C_n \cup (C_n+1) \cup(C_n+z)\cup(C_n+y)$ is at most $4(\frac{q+3}{4}) - 4=q-1<q$. Hence, (\ref{4P}) cannot hold in this case.

\medskip

\subsection{$q$ odd and $n=\frac{q+ 1}{2}$}\label{q-1/2}

Next, suppose $n-1= (q-1)/2$, i.e., $n=(q+1)/2$ and $C_n$ is the set of squares (including $0$) in $\F$. Here $(\ref{4P})$ would imply that, given any member $\g \in \F$, at least one of $\g,\g+1, \g+z,\g+y $ is a square. In other words, not all of these four elements are non-squares. To this end, define $N_q$ to be the number of elements $\A \in \F$ such that each of $\A -1, \A-z, \A-y$ is a non-square. Then,
\[N_q=\frac{1}{16}\sum_{\alpha \notin C_4}(1- \lambda(\A))(1-\lambda(\A-1))(1- \lambda(\A-z))(1-\lambda(\A-y)).\]

Now, let
\[S_1= \sum_{\alpha \notin C_4}\lambda(\alpha),\; S_2= \sum_{\alpha \notin C_4}\lambda(\alpha-1),\;S_3= \sum_{\alpha \notin C_4}\lambda(\alpha-z),\;S_4= \sum_{\alpha \notin C_4}\lambda(\alpha-y);\]
\[ T_1= \sum_{\alpha \notin C_4}\lambda(\alpha(\alpha-1)) ,\; T_2= \sum_{\alpha \notin C_4}\lambda(\alpha(\alpha-z)), \; T_3= \sum_{\alpha \notin C_4}\lambda(\alpha(\alpha-y)) ,  \]
\[ T_4= \sum_{\alpha \notin C_4}\lambda((\alpha-1)(\alpha-z)) ,\; T_5= \sum_{\alpha \notin C_4}\lambda((\alpha-1)(\alpha-y)), \; T_6= \sum_{\alpha \notin C_4}\lambda((\alpha-z)(\alpha-y))  ; \]

\[ U_1= \sum_{\alpha \notin C_4} \la(\alpha(\alpha-1)(\alpha-z)), \quad U_2= \sum_{\alpha \notin C_4} \la(\alpha(\alpha-1)(\alpha-y)),\]
\[ U_3= \sum_{\alpha \notin C_4} \la(\alpha(\alpha-z)(\alpha-y)), \quad U_4= \sum_{\alpha- \notin C_4} \la((\alpha-1)(\alpha-z)(\alpha-y)),\]

\[ \quad V= \sum_{\alpha \notin  C_4} \lambda(\alpha(\alpha^3-1)) .\]

Thus,
\begin{equation} \label{Neq}
N_q=\frac{1}{16}\left(q-4- \sum_{i=1}^4S_i+\sum_{i=1}^6T_i- \sum_{i=1}^4U_i + V\right) .
\end{equation}

\medskip

We proceed to evaluate or estimate these various sums, beginning with the $S_i$. We have
\[ S_1=\sum_{\A \neq 1, y,z}\la((\A)=-\lambda(1)- \lambda(z)- \lambda(y) =-3,\]
\[S_2=\sum_{\A\ne0,y,z}\lambda(\alpha-1)
=\lambda(-1) - \lambda(z-1)-\lambda(y-1) ) =- \lambda(-1)-\lambda(z-1)- \lambda(-z-2).\]
Similarly,
\[S_3=-\lambda(-z)- \lambda(1-z)- \lambda(-2z-1), \]
\[S_4=-\lambda(1+z)- \lambda(z+2)-\lambda(1+2z).\]

\medskip

To find expressions for the $T_i$, we recall that, if $f$ is a monic quadratic polynomial in $\F[x]$ (with distinct roots), then thanks to \cite[Theorem 2.1.2]{BEW98} we receive that
\[ \sum_{\g  \in \F}\la(f(\g))=-1.\]
Accordingly,
\begin{equation*}
\begin{split}
T_1= \sum_{\alpha \neq y,z}\lambda(\alpha(\alpha-1))=-1-\lambda(z(z-1))-\lambda((-z-1)(-z-2)
&= -1-\lambda(-2z-1) -\lambda(2z+1);
\end{split}
\end{equation*}

\begin{equation*}
\begin{split}
T_2&= \sum_{\alpha \ne 1,y}\lambda(\alpha(\alpha-z)) =-1-\lambda(1-z))-\lambda((-z-1)(-z-2)) =-\lambda(1-z)- \lambda((z+1)(2z+1))
\\ &=-1-\lambda(1-z)-\lambda(z-1),
\end{split}
\end{equation*}
since $z^2=-z-1$.

Similarly,
\begin{equation*}
T_3 = \sum_{\alpha \neq 1, z}\lambda(\alpha(\alpha+z+1))=-1-\lambda(z+2)-\lambda(-z-2);
\end{equation*}

\begin{equation*}
T_4 = \sum_{\alpha \ne 0,y}\lambda((\alpha-1)(\alpha-z))=-1-2\lambda(z);
\end{equation*}

\begin{equation*}
\begin{split}
T_5 = \sum_{\alpha \ne 0,z}\lambda((\alpha-1)(\alpha+z+1))&=-1-\lambda(-z-1)-\lambda(-3z-3)\\
&=-2-\-\la(z+1),
\end{split}
\end{equation*}
since $\la(-3)=1$ and $\la(-z-1)=\la(z^2)=1$;
\begin{equation*}
T_6= \sum_{\alpha \ne 0,1}\lambda((\alpha-z)(\alpha+z+1))=-1-\lambda(1)-\lambda(3)=-2-\lambda(-1).
\end{equation*}

\medskip
Aggregating the  working so far, after much cancelation, we conclude simply that

\begin{equation}\label{STterms}
N_q=\frac{1}{16}\left(q-10 +\la(-1) - \sum_{i=1}^4U_i + V\right).
\end{equation}

Beyond $(\ref{STterms})$, we have to estimate the $U$ and $V$ terms using the classic Weil's theorem (see, e.g., \cite{sch76} for an elementary proof).

\begin{lemma}\label{weil}
Suppose $\chi$ is a multiplicative character of order $d > 1$ in $\F$. Further, suppose $f(x) \in \F[x]$ is a polynomial whose set of zeros in its splitting field over $\F$ has cardinality $m$, and that $f$ is not a constant multiple of a $d$-th power. Then, 
\[\sum_{\g \in\F} \chi(f(\g)) \leq (m-1) \sqrt{q}.\]
\end{lemma}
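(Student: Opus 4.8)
The statement is the classical Weil bound for character sums, so the plan is to follow the elementary (Stepanov--Schmidt) route advertised by the citation rather than the full Riemann hypothesis for curves. First I would record the standing reductions: since $\chi$ has order $d$, necessarily $d\mid q-1$, and since $\chi$ is trivial on $d$-th powers we may replace $f$ by its $d$-th-power-free part, which does not affect $\chi(f(\g))$ away from the zeros of the extracted factor and does not increase the number of distinct roots. The key device is to translate the sum into a point count on the superelliptic curve $C\colon y^d=f(x)$. Indeed, for $c\in\F$ with $c\neq 0$ the number of $d$-th roots of $c$ in $\F$ equals $\sum_{j=0}^{d-1}\chi^j(c)$, so if $N$ denotes the number of affine $\F$-points of $C$ then
\[ N=\#\{x:f(x)=0\}+\sum_{j=0}^{d-1}\ \sum_{x:\,f(x)\neq0}\chi^j(f(x)). \]
Separating the $j=0$ term (which contributes $q$ minus the number of zeros of $f$) isolates $\sum_{j=1}^{d-1}S_j$, where $S_j=\sum_{\g\in\F}\chi^j(f(\g))$ and $S_1$ is exactly the quantity to be bounded.

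The heart of the argument is Stepanov's method of auxiliary polynomials, used to control $N$. One constructs a nonzero polynomial in $x$, of carefully restricted degree, that after reducing powers of $y$ via $y^d=f(x)$ vanishes to a prescribed high order $\ell$ at the $x$-coordinate of every rational point of $C$. The construction exploits the Frobenius identities $x^q=x$ and, on rational points, $y^q=y\,f(x)^{(q-1)/d}$, together with Hasse derivatives to certify the high-order vanishing. Counting zeros with multiplicity yields $\ell\cdot N\le \deg\Psi$ for the auxiliary polynomial $\Psi$, and optimizing the free parameters gives an upper bound $N\le q+O(\sqrt q)$. A symmetric application of the same construction furnishes the matching lower bound $N\ge q-O(\sqrt q)$, so that $|N-q|$ is $O(\sqrt q)$ with the implied constant governed by the genus of $C$.

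Finally, to convert this aggregate point bound into the sharp constant $m-1$ for the single sum $S_1$, I would run the estimate over every extension $\F_{q^s}$ simultaneously. Writing $N_s$ for the point count over $\F_{q^s}$, the quantities $N_s-q^s$ are power sums $-\sum_k\omega_k^{\,s}$ of finitely many ``Frobenius eigenvalues'' $\omega_k$, whose number is at most $2g=(d-1)(m-1)$ by the degree and genus bound for the superelliptic curve. The two-sided estimate $|N_s-q^s|=O(\sqrt{q^s})$, holding for all $s$, forces $|\omega_k|=\sqrt q$ for each $k$, since otherwise a dominant term would violate the inequality for large $s$; the eigenvalues attached to the fixed character $\chi$ then form a sub-collection of size at most $m-1$, and summing these gives $|S_1|\le(m-1)\sqrt q$. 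The main obstacle is precisely the Stepanov construction: producing an auxiliary polynomial that is at once of low degree and of high vanishing order is delicate, and tracking the exact eigenvalue count $m-1$ per character, equivalently the genus computation for $y^d=f(x)$ when $d\nmid\deg f$, is what pins down the optimal constant.
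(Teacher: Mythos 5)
The paper does not prove this lemma at all: it is quoted as the classical Weil bound, with the reader referred to Schmidt's lecture notes for an elementary proof, so there is no internal argument to compare yours against. Your sketch is a faithful outline of exactly the route that citation points to --- the Stepanov--Schmidt method: reduce to $d$-th-power-free $f$, convert the sum to a point count on $y^d=f(x)$, obtain $|N_s-q^s|=O(q^{s/2})$ over every extension by the auxiliary-polynomial construction, and then use the rationality of the $L$-functions to deduce that each $S_j$ is a power sum of at most $m-1$ inverse roots, all of modulus at most $\sqrt q$. Two remarks. First, what you have written is an outline rather than a proof: the Stepanov construction itself and the rationality plus degree bound $\deg L(T,\chi)\le m-1$ are precisely the content of the cited reference, and you correctly flag them as the hard part; since the lemma is being invoked as a known theorem, that is an appropriate level of detail here. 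Second, for the application in the paper you only need $|\omega_k|\le\sqrt q$, which follows from the two-sided point-count bound via the radius of convergence of $\sum_s(N_s-q^s)T^s$; the exact equality $|\omega_k|=\sqrt q$ you assert requires the functional equation and is not needed. (You might also note that the displayed inequality in the statement is missing absolute-value bars on the left-hand side; the absolute-value form is what is used in Section~\ref{q-1/2}.)
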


It now follows from Lemma {\ref{weil} that $U_1= \sum_{\alpha \neq y}\la(\alpha(\alpha-1)(\alpha-z))$ has absolute value at most $2 \sqrt{q}+1$, so that $-U_1 \ge -2 \sqrt{q} -1$. Similarly, $-U_i \geq -2 \sqrt{q}-1$, for  $i \leq 4$. Likewise, $V= \sum_{\alpha \in \F}\la(\alpha(\alpha^3-1)) \ge -3 \sqrt{q}$.

\medskip

In fact, we can improve the estimate for $\displaystyle {U_4=\sum_{\A \ne 0}\la(\A^3-1)}$. Let $\eta$ be a cubic character on $\F$. Setting $\A^3=\beta$, we see that
\[U_4= \frac{1}{3}\sum_{i=0}^2\sum_{\beta\neq0}\big(\la(\beta-1)\eta(\beta)\Big).\]
Hence,
\[U_4=\frac{\la(-1)}{3}(-1+J(\la, \eta)  +J(\la,\eta^2)),\]
where $J(\chi,\la)$ is the Jacobi sum $\displaystyle{\sum_{x \in \F}\chi(x)\eta(1-x)}$ (see \cite{BEW98}). Since the Jacobi sums have absolute value $\sqrt{q}$, we deduce that $|U_4|\le \frac{1+2\sqrt{q}}{3}$.
It next follows from $(\ref{STterms})$ and the above bounds that
\[ N_q \geq  \frac{1}{16}\Big( q- \frac{29}{3} \sqrt{q} -16\Big),\]
because $3\times 2 +\frac{2}{3}+3=\frac{29}{3} $ and where the constant term $16$ could be reduced slightly. Certainly, $N_q$ is guaranteed to be positive whenever $q > 124$.

\medskip

We summarize the outcome of Section \ref{q-1/2} in the following assertion.

\begin{theorem}\label{i=2}
Suppose $q$ is an odd prime power and $n=\frac{q-1}{2}$. Suppose $q>124$. Then, not every element of $\F$ can be the sum of a $4$-potent and an $n$-potent.
\end{theorem}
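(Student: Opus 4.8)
The plan is to observe that Theorem~\ref{i=2} is equivalent to the single assertion $N_q>0$. Indeed, the set $C_n$ here consists precisely of the squares of $\F$ together with $0$, so a given $\g\in\F$ is a sum of a $4$-potent and an $n$-potent exactly when at least one of $\g,\g-1,\g-z,\g-y$ lies in $C_n$; equivalently, $\g$ \emph{fails} to be such a sum exactly when all four of these elements are non-zero non-squares. The quantity $N_q$ counts precisely these exceptional $\g$: the restriction $\A\notin C_4$ in its defining sum removes only the four elements $0,1,z,y$, each of which is trivially representable (subtracting itself leaves $0\in C_n$) and each of which would make one of the shifted arguments vanish. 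Hence it suffices to prove that $N_q\ge 1$ whenever $q>124$.

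For this I would take the reduced expression $(\ref{STterms})$ at face value and feed in the estimates already established. Writing
\[ 16\,N_q = q-10+\la(-1)-\sum_{i=1}^4 U_i + V, \]
I would use $\la(-1)\ge -1$, the Weil bounds $|U_i|\le 2\sqrt{q}+1$ for $i=1,2,3$ from Lemma~\ref{weil}, the sharper Jacobi-sum bound $|U_4|\le\frac{1+2\sqrt{q}}{3}$, and $V\ge -3\sqrt{q}$, again via Lemma~\ref{weil}. Collecting the coefficients of $\sqrt{q}$, namely $3\cdot 2+\frac23+3=\frac{29}{3}$, and bounding the constant term crudely (replacing the exact $-\frac{43}{3}$ by the rounder $-16$) delivers the displayed inequality $16\,N_q\ge q-\frac{29}{3}\sqrt{q}-16$.

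It then remains only to solve a quadratic inequality. Setting $t=\sqrt{q}$, the polynomial $t^2-\frac{29}{3}t-16$ opens upward with larger root $t_0=\frac12\big(\frac{29}{3}+\sqrt{\frac{1417}{9}}\big)\approx 11.1$, so it is positive once $t>t_0$, i.e. once $q>t_0^2\approx 123.4$. Consequently every odd prime power $q>124$ forces $N_q>0$, producing an element of $\F$ with no representation as a $4$-potent plus an $n$-potent, which is exactly the conclusion of the theorem.

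The honest remark about difficulty is that no genuine obstacle remains at this stage: the substantive work — the exact evaluation of the $S_i$ and $T_i$ and the massive cancellation leading to $(\ref{STterms})$, and above all the estimation of the cubic sums $U_1,\dots,U_4,V$ through Weil's theorem together with the Jacobi-sum refinement of $U_4$ — has already been carried out before the statement. The only care needed here is the arithmetic of combining the bounds and checking that the resulting threshold $q\approx 123.4$ sits safely below the clean cutoff $q>124$; the deliberate slack in the constant ($16$ in place of $\frac{43}{3}$) is what keeps this last inequality transparent.
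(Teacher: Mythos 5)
Your proposal is correct and follows the paper's own argument exactly: you reduce the theorem to $N_q>0$, assemble the identity $(\ref{STterms})$ with the Weil bounds on $U_1,U_2,U_3$, $V$ and the Jacobi-sum refinement for $U_4$, and solve the resulting quadratic in $\sqrt q$ to obtain the threshold $q>124$. The only addition over the paper is your explicit verification that the larger root of $t^2-\tfrac{29}{3}t-16$ is about $11.1$, which confirms the paper's stated cutoff.
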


\subsection{$q$ odd or even and $n=\frac{q+ 2}{3}$}\label{q-1/3}

Finally, suppose that $n-1= (q-1)/3$, i.e., $n= (q+2)/3$ and $C_n$ is the set of cubes (including $0$) in $\F$. In particular, in the case when $q$ is even (i.e., $q=2^v$ with $v$ even), then $C_4$ is simply the subfield $\f$. This time $(\ref{4P})$ would imply that, given any element $\g \in \F$, at least one of $\g,\g+1, \g+z,\g+y $ is a cube.     In other words, not all of these four elements are non-cubes. Let $M_q$ denote the number of elements $\gamma \in \F$ such that each of $\g, \g -1, \g-z, \g-y$ is a  non-cube.  We give an expression for $M_q$ in terms of the characteristic function $\nu$ for non-cubes in $\F^*$ defined in the following lemma.

\medskip
\begin{lemma} \label{noncube}
Suppose $3|(q-1)$ and $\eta$ is a cubic character on $\F$. For non-zero elements $\g \in\F$, define the function $\nu$ by
\[\nu(\g)= \frac{1}{3}(2 - \eta(\g)- \eta^2(\g)). \]
Then,
\[\nu(\g) =\begin{cases}
1&\text{if } \g \text{ is a non-cube in } \F,\\
0&\text{if } \g \text{ is a non-zero cube in } \F.
\end{cases}\]
\end{lemma}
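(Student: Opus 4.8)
The plan is to verify the claimed two-valued behaviour of $\nu$ by a direct case analysis, exploiting only the defining properties of a cubic character together with the cyclic structure of $\F^*$. The single fact I would establish (or simply recall) at the outset is that, since $\eta$ has order exactly $3$, for every nonzero $\g$ the value $\eta(\g)$ is a cube root of unity, and moreover $\eta(\g)=1$ precisely when $\g$ is a cube in $\F^*$. This is because the cubes form the kernel of $\eta$, a subgroup of index $3$ in the cyclic group $\F^*$; the hypothesis $3\mid(q-1)$ is exactly what guarantees that such a character, and correspondingly such an index-$3$ subgroup, exists.

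With this in hand the computation splits into the two cases of the statement. If $\g$ is a nonzero cube, then $\eta(\g)=1$, whence $\eta^2(\g)=\eta(\g)^2=1$ as well, and so $\nu(\g)=\frac{1}{3}(2-1-1)=0$. If instead $\g$ is a non-cube, then $\eta(\g)$ is one of the two \emph{primitive} cube roots of unity, say $\omega$ or $\omega^2$; in either situation the unordered pair $\{\eta(\g),\eta^2(\g)\}$ equals $\{\omega,\omega^2\}$, so that $\eta(\g)+\eta^2(\g)=\omega+\omega^2=-1$ by the standard relation $1+\omega+\omega^2=0$. Consequently $\nu(\g)=\frac{1}{3}(2-(-1))=1$, as asserted.

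I do not anticipate any genuine obstacle here: the entire content is the elementary identity $1+\omega+\omega^2=0$ for cube roots of unity, applied after correctly matching the two cases ``cube'' and ``non-cube'' to the values of $\eta$. The only point requiring a moment's care is to confirm that, whichever of the two non-cube cosets $\g$ happens to lie in, the pair $\{\eta(\g),\eta^2(\g)\}$ is always exactly $\{\omega,\omega^2\}$, so that the value $-1$ of the sum $\eta(\g)+\eta^2(\g)$ is independent of that choice. This is immediate, since squaring merely interchanges $\omega$ and $\omega^2$ and fixes the set. Hence $\nu$ is indeed the indicator function of the non-cubes among the nonzero elements of $\F$, which is precisely the form in which it will be deployed in the expression for $M_q$.
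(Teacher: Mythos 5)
Your proof is correct and follows essentially the same route as the paper's: split into the cube and non-cube cases, use $\eta(\g)=1$ on cubes, and use $\zeta+\zeta^2=-1$ for a primitive cube root of unity $\zeta$ on non-cubes. Your write-up is in fact slightly more careful than the paper's (whose second sentence reads ``Thus'' where it means ``Otherwise''), but the content is identical.
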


\begin{proof}
Suppose $\g$ is a non-zero cube in $\F$. So, $\eta(\g)=1$ and evidently $\nu(\g) =0$. Thus, $\eta(\g)=\zeta$, a primitive cube root of 1 and the result follows since
$\zeta+\zeta^2=-1$.
\end{proof}

Consulting with Lemma~\ref{noncube}, it follows that
\[M_q=\frac{1}{81}\sum_{\alpha \notin C_4}(2-\eta(\A)-\eta^2(\A))(2- \eta(\A-1)-\eta^2(\A-1))(2- \eta(\A-z)-\eta^2(\A-z))(2-\eta(\A-y)-\eta^2(\A-y)).\]
This yields
\begin{equation} \label{Meq}
M_q=\frac{1}{81}\left(16(q-4)-8 \sum_{i=1}^8S_i+4\sum_{i=1}^{24}T_i-2\sum_{i=1}^{32}U_i + \sum_{i=1}^{16}V_i\right),
\end{equation}
where the $S_i, T_i, U_i, V_i$ are character sums as in (\ref{Neq}). In particular, each of these sums has the form
$\displaystyle{\sum_{\A \notin C_4}\eta(g(\A))} $, where $g$ is a polynomial of degree $j=1,2,3,4$ corresponding to $S_i, T_i, U_i, V_i$, respectively.

As in Section \ref{q-1/2}, each $S_i$ can be evaluated explicitly, but in any case $|S_i \leq 3$.

Next, the first four sums of the $T_i$ have the shape

\begin{equation}\label{Teq}
\sum_{\alpha \ne y,z} \eta^j(\alpha)\eta^k(\alpha-1), \; j,k \in \{1,2\}.
\end{equation}
Here, the sums displayed in $(\ref{Teq})$ correspond to selecting the pair $(0,1)$ from $C_4$. Altogether, there are six choices of pairs from $C_4$ and four associated sums $T_i$ with each pair. Taking into account the two excluded values of $C_4$ in such a sum, we see with the aid of Lemma \ref {weil} that $|T_i|\leq \sqrt{q}+2$. But, in half the cases (for example, in $(\ref{Teq})$ with $j\ne k$), we have $|T_i|\leq 3$, since, for instance,

\begin{equation*}
\begin{split}
\sum_{\alpha \ne y,z} \eta(\alpha)\eta^2(\alpha-1)&= \sum_{\alpha \ne 1} \eta\left(\frac{\alpha}{\alpha-1}\right)  -\eta(3(z+1))-\eta(3z)\\
& =\sum_{\beta \neq 1} \eta(\beta) -\eta(3(z+1))-\eta(3z) = -\eta(3(z+1))- \eta(3z) - 1.
\end{split}
\end{equation*}

The $U_i$ terms can be bounded again utilizing Lemma~\ref{weil}. For example,
\[ \left| \left[\sum_{\alpha \ne y}\eta(\alpha(\alpha-1)(\alpha-z)) \right]+ \eta(3)\right| \leq 2 \sqrt{q}.\]
Finally, each of the $V_i$ terms are bounded absolutely by $3\sqrt{q}$.

\medskip

Putting all this together, we have
\begin{equation} \label{Mbound}
M_q > \frac{1}{81} \Big (16q- 224 \sqrt{q} -560\Big),
\end{equation}
which is positive whenever $ q> 261$.

\medskip

We now summarize the outcome of Section \ref{q-1/3} in the following claim.

\begin{theorem}\label{i=3}
Suppose $q$ is an odd prime power and $n=\frac{q-1}{3}$. Suppose $q>261$. Then, not every element of $\F$ can be the sum of a $4$-potent and an $n$-potent.
\end{theorem}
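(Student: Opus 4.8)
The plan is to recast representability as a counting statement and then harvest the character-sum estimates developed above. By $(\ref{4P})$, the assertion that every $\g \in \F$ is a sum of a $4$-potent and an $n$-potent (here a cube, since $C_n$ is the set of cubes including $0$) is equivalent to: for every $\g$, at least one of $\g,\g-1,\g-z,\g-y$ lies in $C_n$. First I would note that every $\g \in C_4$ is automatically representable, because one of these four shifts is then $0 \in C_n$; hence any non-representable element lies outside $C_4$, and for such a $\g$ non-representability means exactly that all four shifts are nonzero non-cubes. Consequently the non-representable elements are counted precisely by $M_q$, and the theorem reduces to the single inequality $M_q \geq 1$.

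Next I would convert $M_q$ into character sums by inserting the non-cube indicator $\nu$ of Lemma~\ref{noncube}. Restricting the outer sum to $\A \notin C_4$ keeps all four arguments $\A,\A-1,\A-z,\A-y$ nonzero, so $\nu$ is everywhere defined; expanding the product
\[\nu(\A)\,\nu(\A-1)\,\nu(\A-z)\,\nu(\A-y)\]
into its $81$ monomials then produces the decomposition $(\ref{Meq})$, a weighted combination of sums $\ds\sum_{\A \notin C_4}\eta(g(\A))$ with $\deg g$ running over $1,2,3,4$ and weights $16,-8,4,-2,1$, grouped as the $S_i,T_i,U_i,V_i$.

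The core of the argument is then to bound these sums. The degree-one $S_i$ are negligible ($|S_i|\leq 3$, in fact explicitly computable as in Section~\ref{q-1/2}). For the degree-two $T_i$ the essential dichotomy is that the twelve \emph{mixed} sums, carrying $\eta$ on one linear factor and $\eta^2$ on the other, collapse to bounded constants after a M\"obius substitution such as $\beta=\A/(\A-1)$, whereas the twelve \emph{pure} sums require Weil's estimate (Lemma~\ref{weil}) and satisfy $|T_i|\leq \sqrt q+2$ once the two omitted $C_4$-values are reinstated. The degree-three $U_i$ and degree-four $V_i$ are controlled directly by Lemma~\ref{weil} through $2\sqrt q$ and $3\sqrt q$ respectively, up to bounded corrections for the excluded arguments. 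Collecting the $\sqrt q$-contributions with their multiplicities, $4\cdot 12\cdot 1 + 2\cdot 32\cdot 2 + 1\cdot 16\cdot 3 = 224$, while the leading term $16(q-4)/81$ preserves the coefficient $16$, yields the estimate $(\ref{Mbound})$, namely $M_q>\frac{1}{81}(16q-224\sqrt q-560)$. Treating the right-hand side as a quadratic in $\sqrt q$ shows it is positive for $q>261$, whence $M_q\geq 1$ and the theorem follows.

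The step I expect to be the true obstacle is the bookkeeping in the last paragraph: sorting the twenty-four $T_i$ into the twelve that degenerate to constants and the twelve that genuinely need Weil, and tracking the $\eta(3)$-type corrections arising from the two excluded $C_4$-values in each sum (and the analogous corrections for the $U_i$ and $V_i$), so that the aggregate constant settles at $560$. The $\sqrt q$-coefficient $224$ and the main coefficient $16$ fall out cleanly, but the constant term is where the accounting is most delicate; everything else is routine once it is carried out.
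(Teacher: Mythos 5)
Your proposal follows the paper's own argument essentially step for step: the reduction of non-representability to $M_q \geq 1$, the expansion via the indicator $\nu$ of Lemma~\ref{noncube} into the decomposition $(\ref{Meq})$, the split of the $T_i$ into twelve mixed sums that collapse under a M\"obius substitution and twelve pure sums needing Lemma~\ref{weil}, and the final tally $4\cdot 12\cdot 1 + 2\cdot 32\cdot 2 + 1\cdot 16\cdot 3 = 224$ leading to $(\ref{Mbound})$ and positivity for $q>261$. This matches the paper's proof in substance and in all key quantitative details.
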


We finish our work with the final subsection.

\subsection{Completion of proof of Theorem \ref{main}}\label{completion}

Explicit computations (see Appendix) were done for all $q\leq 1000$. This covers the gaps left by Theorems \ref{i=2} and \ref{i=3} and acts as a further check on all values of $q$ up to 1000.

\section{A conjecture and a further problem} \label{conjecture}

There is a natural extension of the Restricted Problem to the case $m=5$. We have not yet undertaken the theoretical aspect of the argument, but have already performed sufficient computations for $q$ up to 10000 to be confident about the veracity of the following conjecture.

\begin{conjecture}\label{FF5}
With the normal conditions, every element of the finite field $\F$ is a sum of a 5-potent and an $n$-potent only for the $(q,n)$ pairs
$(7,4)$, $(9,3)$, $(9,5)$, $(5,2)$, $(5,3)$, $(13,5)$, $(13,7)$, $(17,9)$, $(25,9)$, $(25,13)$, $(29,15)$, $(41,21)$, $(49,25)$, $(53,27)$, $(73,37)$, $(81,41)$, and $(125,63)$.
\end{conjecture}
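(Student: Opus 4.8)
We sketch the strategy we would use to establish Conjecture~\ref{FF5}, which runs parallel to the proof of Theorem~\ref{main}. The first step is to record the structure of the set $C_5$ of 5-potents: since $a^5=a$ is equivalent to $a=0$ or $a^4=1$, we have $|C_5|=1+\gcd(4,q-1)$, and three regimes arise. If $q\equiv1\ (\m 4)$ then $C_5=\{0,1,-1,i,-i\}$, where $i$ is a primitive fourth root of unity; if $q\equiv3\ (\m 4)$ then $C_5=\{0,1,-1\}$; and if $q$ is even then $q-1$ is odd, so $C_5=\{0,1\}$. Because $|C_n|=n$ whenever $(n-1)\mid(q-1)$, the covering condition $\bigcup_{c\in C_5}(C_n+c)=\F$ forces $n\,|C_5|\ge q$, so only the candidates $n=\frac{q-1}{i}+1$ with $i\mid(q-1)$ and $i\le|C_5|$ can occur. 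Consequently $i\in\{2,3,4,5\}$ when $q\equiv1\ (\m 4)$, $i\in\{2,3\}$ when $q\equiv3\ (\m 4)$, while for even $q$ the only admissible value $i=3$ already violates $n\,|C_5|\ge q$ once $q>4$; this explains the absence of even fields from the conjectured list.

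Next I would eliminate the boundary case $i=|C_5|$ by the overlap-and-cardinality argument of Section~\ref{q-1/4}. Here $C_n$ is the subgroup of $|C_5|$-th powers together with $0$, and a short divisibility check shows it contains $C_5$ itself, so $-c\in C_n$ for every $c\in C_5$ forces $0\in C_n+c$ for all such $c$, together with the coincidences $c\in C_n\cap(C_n+c)$; counting these shows $\big|\bigcup_{c\in C_5}(C_n+c)\big|\le q-1<q$, ruling the case out. For each surviving pair $(i,\text{regime})$ with $i<|C_5|$, I would then count the "bad" elements $\g$ for which every $\g-c\ (c\in C_5)$ fails to lie in $C_n$, using the characteristic function of the non-$i$-th powers (the quadratic character $\la$ when $i=2$, the function $\nu$ of Lemma~\ref{noncube} when $i=3$, and the analogous quartic expression when $i=4$). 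Expanding the resulting product of $|C_5|$ factors yields, exactly as in (\ref{Neq}) and (\ref{Meq}), a principal term proportional to $q$ together with character sums $\sum_{\g}\chi(g(\g))$ over polynomials $g$ of degree up to $|C_5|$.

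The heart of the estimate is Lemma~\ref{weil}. The top-degree sum is especially clean because the nonzero elements of $C_5$ are precisely the fourth roots of unity, so
\[(\g-1)(\g+1)(\g-i)(\g+i)=\g^4-1,\]
whence the leading character sum is $\sum_{\g}\chi(\g(\g^4-1))=\sum_{\g}\chi(\g^5-\g)$, bounded by $4\sqrt{q}$; in the $q\equiv3\ (\m 4)$ regime the three translates $\{0,\pm1\}$ give instead $\sum_{\g}\chi(\g^3-\g)$, bounded by $2\sqrt{q}$. As in Section~\ref{q-1/2}, the low-degree sums $S_i$ and $T_i$ are evaluated exactly (those quadratic sums not satisfying the non-$d$-th-power hypothesis of Lemma~\ref{weil} being computed directly), and where possible the Jacobi-sum refinement used for $U_4$ is applied to sharpen the constants. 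The outcome should be a bound of the shape $(\text{count})\ge\frac{1}{c}\big(q-A\sqrt{q}-B\big)$, positive for all $q$ beyond an explicit threshold, which shows the covering fails for large $q$ in every case $i<|C_5|$.

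The main obstacle is the bookkeeping for the five-translate regime $q\equiv1\ (\m 4)$: expanding a product of five characteristic functions produces many more cross terms than the four-factor computations behind (\ref{Neq}) and (\ref{Meq}), and for $i=3,4$ these involve cubic and quartic characters of degree-up-to-five polynomials, so tracking the exact constants $A$ and $B$ --- and hence keeping the threshold small enough that the residual finite range is computationally feasible --- is delicate. Once the threshold is fixed, the remaining fields are checked by the direct computation already carried out for $q$ up to $10000$, which both closes the gap below the threshold and confirms that the only surviving pairs are those listed.
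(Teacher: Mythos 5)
The statement you are trying to prove is not proved in the paper at all: it is stated as Conjecture~\ref{FF5}, and the authors say explicitly that they ``have not yet undertaken the theoretical aspect of the argument'' and rely only on computations for $q\le 10^4$. Your proposal is a sensible roadmap that transplants the method of Theorem~\ref{main}, and its structural observations are correct --- the three regimes $|C_5|=1+\gcd(4,q-1)$, the necessary condition $n\,|C_5|\ge q$, the identity $\g(\g^4-1)=\g^5-\g$ giving the $4\sqrt q$ bound for the top-degree sum, and the overlap-counting elimination of $i=|C_5|$ (where you should make explicit that $i=5$ forces $q\equiv1\ (\m 20)$, hence $4\mid(q-1)/5$ and $C_5\subseteq C_n$, and that at least five doubly covered elements are needed since $5|C_n|=q+4$). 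But a roadmap is not a proof. The decisive content --- expanding the five-fold products, evaluating the degree-one and degree-two sums exactly, bounding the rest via Lemma~\ref{weil}, and extracting explicit constants $A,B$ and hence an explicit threshold in each surviving case ($i=2,3,4$ for $q\equiv1\ (\m 4)$ and $i=2,3$ for $q\equiv3\ (\m 4)$) --- is exactly what you defer to ``delicate bookkeeping,'' and it is the only step that could convert the computational evidence into a theorem. In particular, your final sentence presupposes that every threshold falls below $10^4$; a rough count of the $6^5-3^5$ cross terms in the worst case $i=4$ suggests a threshold near $q\approx 2500$, so the plan is likely viable, but nothing in your write-up establishes this, and if any threshold exceeded $10^4$ the existing computations would not close the gap.

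Two smaller slips. For even $q$ the inequality $n\,|C_5|\ge q$ does not exclude $q=4$, $n=2$ (there $2\cdot2=4=q$); you need the direct observation that in characteristic $2$ one has $C_2+C_5=\{0,1\}\ne\f$. Also, your claim that ``half'' of the quadratic sums can be evaluated exactly carries over from the cubic-character computation in Section~\ref{q-1/3} only for $i=3$; for $i=2$ all ten quadratic sums evaluate to $-1+O(1)$ by \cite[Theorem 2.1.2]{BEW98}, while for $i=4$ most of them are genuine Jacobi sums of modulus $\sqrt q$ and yield no saving, which is precisely why the $i=4$ constant is the one that needs the most care.
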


In a new direction, we also suggest a further problem of some interest and importance.

\begin{problem} Describe those finite fields whose elements are sums of a potent plus a 3-potent plus a 4-potent.
\end{problem}

\appendix
\section{Pari-gp code}

Explicit computations take advantage of the fact that different models of the finite field $F_q$ are isomorphic to each other. This implies that properties that rely only on additions and multiplications proved for one particular model of a given finite field remain valid for all other models of the same finite field. All explicit computations will be done using the pari-gp computer algebra system~\cite{PARI2}, version 2.13.1.

The following pari-gp function receives the order $q$ of a finite field and returns a list with all its elements.
It uses one specific model of the finite field; that same model will be used in all subsequent computations.
\begin{lstlisting}
ff_elements(q)={
  my(f,r);                   /* local variables                 */
  f=vector(q);               /* will hold all field elements    */
  r=ffprimroot(ffgen(q));    /* multiplicative group generator  */
  f[1]=r-r;                  /* store zero                      */
  f[2]=r^0;                  /* store one                       */
  for(k=3,q,f[k]=r*f[k-1];); /* store the other field elements  */
  return(Set(f));            /* return the sorted list          */
};
\end{lstlisting}

The following pari-gp function receives a list of all elements of a finite field and an $n$ value and retuns the list of all $n$-potents.
\begin{lstlisting}
n_potents(f,n)={
  my(l,nl);                  /* local variables                   */
  l=vector(#f);              /* will hold the list of n-potents   */
  nl=0;                      /* number of n_potents               */
  for(k=1,#f,
    if(f[k]==f[k]^n,         /* test if f[k] is an n-potent       */
      nl=nl+1;               /* yes, add it to the list           */
      l[nl]=f[k];
    );
  );
  l=vector(nl,k,l[k]);       /* trim list                         */
  return(Set(l));            /* return the sorted list            */
};
\end{lstlisting}

The following pari-gp function receives the order $q$ of a finite field and an $n$ value and checks if there is any $k$ for which all the elements of the finite field are the sum of a $n$-potent and an $k$-potent. It prints the tuple $(q,n,k)$ when that is so.
As stated in the introduction, it is only necessary to test $k$ values for which $k-1|q-1$.
\begin{lstlisting}
check_one(q,n)={
  my(f,k,pn,pk,s);           /* local variables                   */
  f=ff_elements(q);          /* the finite field elements         */
  pn=n_potents(f,n);         /* the set of n-potents              */
  fordiv(q-1,d,              /* try all divisors of q-1           */
    k=d+1;                   /* the other potent exponent         */
    if(k<q-1,                /* check only proper divisors        */
      pk=n_potents(f,k);     /* the set of k-potents              */
      if(#pn*#pk>=#ff,       /* do we have enouigh potents?       */
        s=setbinop((x,y)->x+y,pn,pk); /* yes, add the two sets    */
        if(#s==q,                     /* all elements?            */
          printf("%d %d %d\n",q,n,k); /* yes, print relevant data */
        );
      );
    );
  );
};
\end{lstlisting}

Finally, the following pari-gp function searches for tuples $(q,n,k)$ for $q$ values up to a given limit, and for a given value of $n$.
\begin{lstlisting}
check_all(n=4,limit=10^3)={
  my(k,dt);                  /* local variables                   */
  dt=getabstime();           /* start measuring execution time    */
  forprime(p=2,limit,        /* try all primes <= limit           */
    k=1;                     /* initil exponent                   */
    while(p^k<=limit,        /* try all prime powers <= limit     */
      q=p^k;                 /* number of finite field elements   */
      check_one(q,n);        /* check this prime power            */
      k++;                   /* next exponent                     */
    );
  );
  dt=getabstime()-dt;        /* measure execution time            */
  printf("done in %.1fs\n",0.001*dt); /* report execution time    */
};
\end{lstlisting}

After these functions are defined, the following code was run.
\begin{lstlisting}
check_all(4,1000);
\end{lstlisting}
It produced the following output which serves associated with the claim in Section \ref{completion}.
\begin{lstlisting}
4 4 2
25 4 13
7 4 3
7 4 4
49 4 25
13 4 7
19 4 10
31 4 16
43 4 22
103 4 52
done in 1.1s
\end{lstlisting}
The statement of Conjecture~\ref{FF5} was done in the same way, by running the code
\begin{lstlisting}
check_all(5,10000);
\end{lstlisting}

\vskip3.0pc

\end{document}